\newtheorem{theorem}{Theorem}[section]
\newtheorem{lem}[theorem]{Lemma}
\newtheorem{prop}[theorem]{Proposition}
\newtheorem{cor}[theorem]{Corollary}
\theoremstyle{definition}
\newtheorem{defn}[theorem]{Definition}
\newtheorem{re}[theorem]{Remark}
\newcommand{\dom}[0]{\ensuremath{\textup{Dom}}}
\newcommand{\id}[0]{\ensuremath{\textup{Id}}}
\newcommand{\image}[0]{\ensuremath{\textup{Image}}}
\newcommand{\Ker}[0]{\ensuremath{\textup{Ker}}}
\newcommand{\sgn}[0]{\ensuremath{\textup{sgn}}}
\newcommand{\paral}{/\kern-0.3em/}
\def\parals_#1{/\kern-0.3em/_{\!#1}}
\def\paralss_#1^#2{/\kern-0.3em/_{\!#1}^{\!#2}}
\begin{document}
\begin{center}
\Large{\textbf{It\^o-Wiener chaos and the Hodge decomposition on an abstract Wiener space}}
\par
\normalsize{By {Yang Yuxin}}
\par
\today
\end{center}
\textbf{Abstract:}
Using the structure of the Boson-Fermion Fock space and an argument taken from \cite{bieliavsky2005symplectic}, 
we give a new proof of the triviality of the $L^2$ cohomology groups on an abstract Wiener space, alternative to that given by Shigekawa \cite{shigekawa1986rham}.   
We apply some representation theory of the symmetric group to characterise the spaces of exact and co-exact forms in their Boson-Fermion Fock space representation.  
\\
\textbf{Keywords:} 
Wiener chaos, Hodge decomposition, $L^2$ cohomology, abstract Wiener space, Boson-Fermion Fock space, representation theory, symmetric group.
\\
\textbf{MSC2010:} 
58A14, 58A12, 58B05, 20C35, 60H30.
\\
\textbf{Acknowledgements:} 
The author is immensely indebted to Professor David Elworthy and Dr Mikhail Korotyaev for valuable discussions.  Thanks are also due to Professor John Rawnsley for pointing out to his earlier result, and to Dr Mark Wildon for pointing out the reference by W. Hamernik.  This research is partially funded by an Early Career Fellowship from the Warwick Institute of Advanced Study.
\section{Introduction}
\label{sec:intro}
With a Weitzenb\"ock formula for the Hodge Laplacian with positive curvature, I. Shigekawa \cite{shigekawa1986rham} proved the vanishing of the $L^2$ de Rham cohomology on an abstract Wiener space. 
This was a first step in developing a Hodge theory on infinite dimensional manifolds, a goal first set by L. Gross \cite{gross1967potential} in his pioneering work on infinite dimensional potential theory. 
Shigekawa's definitive treatment of the linear case provides guidance for the study of nonlinear cases. For example, Fang and Franchi \cite{fang1997differentiable} used the It\^o map to transfer Shigekawa's results from the Wiener space to the path spaces over a compact Lie group with a bi-invariant metric. 
However, the problem of developing a Hodge theory in more general infinite-dimensional manifolds remains open.  
\par
We present here a different interpretation of Shigekawa's results based on the well-known isometric isomorphism between the $L^2$ Gaussian space and the symmetric Fock space. This isomorphism is known as the It\^o-Wiener chaos expansion \cite{ito1951multiple} on the classical Wiener space, which expresses any square integrable functional as a sum of multiple stochastic integrals. 
We extend this isomorphism to skew-symmetric $L^2$ vector fields, which in turn allows us to transform the study of the Hodge theory on an abstract Wiener space $(E, H, \mu)$ to the study of vector subspaces of  $H^{\otimes n}$ of the form $H^{\odot k}\otimes H^{\wedge (n-k)}$ for varying integers $k$. 
Such Boson-Fermion Fock spaces have been studied by A. Arai \cite{arai1993dirac}, among others.
Here $H^{\otimes n}$ denotes the completed $n$-th tensor powers of the Hilbert space $H$, with $H^{\odot n}$ and $H^{\wedge n}$ its subspaces of symmetric and skew-symmetric tensors, respectively, both completed using the Hilbert space cross norm inherited from $H^{\otimes n}$. 
\par
In this context, the de Rham complex of exterior differential forms on our abstract Wiener space can be seen to restrict to two long exact sequences of vector spaces of the form $H^{\odot k}\otimes H^{\wedge (n-k)}$.  
The exactness of such sequences has been studied in \cite{bieliavsky2005symplectic} in a purely algebraic setting.  
The result of \cite{bieliavsky2005symplectic}
involves an identity analogous to Shigekawa's Weitzenb\"ock formula, and implies the Hodge decomposition and the triviality of the de Rham cohomology groups on the abstract Wiener space. 
Using the representation theory of symmetric groups, we prove further a direct-sum decomposition of the tensor product $H^{\odot k}\otimes H^{\wedge (n-k)}$ considered as a subspace of $H^{\otimes n}$, which shows geometrically how the exact sequences split. 
Such a decomposition of supersymmetric Fock spaces is of independent interest; algebraically it goes back to W. Hamernik \cite{hamernik1976specht} for the finite dimensional case.  
\par
The organisation of this article is as follows. After a quick review of the basic notation and Shigakawa's results in Section \ref{sec:notation}, we explain the Boson-Fermion Fock-space interpretation of these results in Section \ref{sec:fock}. 
In Section \ref{sec:decomp}, we present a representation-theoretic proof of the direct-sum decomposition of the tensor product $H^{\odot k}\otimes H^{\wedge (n-k)}$. 
\section{Notation and Shigekawa's Results}
\label{sec:notation}
On an abstract Wiener space $(E, H, \mu)$, the natural notion of differentiability is $H$-differentiability, and correspondingly we consider $H$-differential-forms, i.e., sections of dual bundle of exterior powers of $H$. Since we are primarily interested in the $L^2$ theory, we concentrate on $L^2$ differential $q$-forms, denoted $L^2\Gamma(H^{\wedge q})^*$. 
\par
The exterior derivative $d_q$ and its adjoint $d_q^*$ are defined, as in Shigekawa \cite{shigekawa1986rham}, by 
\[ 
d_q=(q+1)A_{q+1}D, 
\quad \mbox{ and }\quad
d_{q}^*=D^*, 
\]
respectively, with $D$ denoting the $H$-derivative, and $D^*$ its adjoint; all the operators here are closed and densely defined. 
Note that the standard skew-symmetrisation operator $A_q$ on $q$-tensors, given by 
\[
A_q(h_1\otimes\dots\otimes h_q)=\frac{1}{q!}\sum_{\rho\in \mathfrak{S}_q}\sgn(\rho) (h_{\rho(1)}\otimes\dots\otimes h_{\rho(q)}), \quad h_1, \cdots, h_q \in H, 
\]
is extended to give the alternating map (denoted by the same symbol) on linear functionals defined on tensor products, so given any $\phi\in L(H^{\otimes q}; \mathbb{R})$, 
\[
A_q\phi(h_1, \dots, h_q)=\frac{1}{q!}\sum_{\rho\in \mathfrak{S}_q}\sgn(\rho) \phi(h_{\rho(1)}, \dots,h_{\rho(q)}). 
\]
The summation here is over all $q!$ elements of the symmetric group $\mathfrak{S}_q$, which consists of all permutations of $\{1, \dots,q\}$.  
\par
Shigekawa \cite{shigekawa1986rham} derived the following Weitzenb\"ock identity: 
\begin{equation}
\label{eq:weitzenbock}
\Delta_q = L + q \id,
\end{equation}
where $\Delta_q=d_q^*d_q+ d_{q-1}d_{q-1}^*$ is the Hodge-Kodaira Laplacian on $q$-forms, and $L=D^*D$ is the Ornstein-Uhlenbeck operator. 
Denote by $\mathfrak{h}_q$ the set of all the harmonic forms of degree $q$, i.e., $\phi\in L^2\Gamma (\wedge^q H)^*$ is in $\mathfrak{h}_q$ if $\phi\in\dom(\Delta_q)$ and $\Delta_q\phi = 0 $. 
\begin{theorem}[Shigekawa \cite{shigekawa1986rham}]
\label{th:shigekawa}
$L^2\Gamma (\wedge^q H)^* = \image(d_{q-1})\oplus \image(d_q^*)\oplus \mathfrak{h}_q$, where 
\begin{description}
\item[1.] $\image(d_{q-1}) = \Ker(d_q)$;  
\item[2.] $\image(d_q^*) = \Ker(d_{q-1}^*)$; and 
\item[3.] 
$\mathfrak{h}_q=\{ 0 \}$ for $q\ge 1$, and $\mathfrak{h}_0=\{\mbox{constant functions}\}$.
\end{description} 
Equivalently, the following sequences are exact: 
\[
0 \rightarrow \mathbb{R}
\xrightarrow{\iota} L^2\Gamma \mathbb{R}
\xrightarrow{d_0} L^2\Gamma H^*
\xrightarrow{d_1}\cdots 
\xrightarrow{d_{q-1}} L^2\Gamma (H^{\wedge q})^*
\xrightarrow{d_{q}} \cdots 
\]
and \[
0 \leftarrow \mathbb{R}
\xleftarrow{\iota^*} L^2\Gamma \mathbb{R}
\xleftarrow{d_0^*} L^2\Gamma H^*
\xleftarrow{d_1^*}\cdots 
\xleftarrow{d_{q-1}^*} L^2\Gamma (H^{\wedge q})^*
\xleftarrow{d_{q}^*} \cdots 
\]
where the maps $\iota: \mathbb{R}\rightarrow L^2\Gamma \mathbb{R}$ and $\iota^*: L^2\Gamma \mathbb{R}\rightarrow\mathbb{R}$ are defined by 
\[
\iota (c)(x) = c, \quad c\in \mathbb{R}, x\in E,
\]
and 
\[
\iota^*f=\mathbb{E}f, \quad f\in 
L^2(E; \mathbb{R}).
\]
respectively.
\end{theorem}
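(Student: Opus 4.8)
The plan is to treat this as an instance of abstract Hodge--Kodaira theory for a Hilbert complex, with the Weitzenb\"ock identity (\ref{eq:weitzenbock}) supplying the positivity that both annihilates the harmonic forms and produces a spectral gap. First I would record the orthogonal decomposition attached to any closed, densely defined operator $T$ between Hilbert spaces: the target splits as $\overline{\image(T)}\oplus\Ker(T^*)$ and the source as $\overline{\image(T^*)}\oplus\Ker(T)$. Applying this to $d_{q-1}$ and to $d_q$ in turn, and using the complex relations $d_qd_{q-1}=0$ (hence $d_{q-1}^*d_q^*=0$), I would assemble the three-fold orthogonal decomposition
\[
L^2\Gamma(\wedge^q H)^* = \overline{\image(d_{q-1})}\oplus\overline{\image(d_q^*)}\oplus\mathfrak{h}_q, \qquad \mathfrak{h}_q=\Ker(d_q)\cap\Ker(d_{q-1}^*),
\]
obtained by intersecting the two binary splittings, which also yield $\Ker(d_q)=\overline{\image(d_{q-1})}\oplus\mathfrak{h}_q$ and $\Ker(d_{q-1}^*)=\overline{\image(d_q^*)}\oplus\mathfrak{h}_q$. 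The remaining tasks are to identify $\mathfrak{h}_q$ with the harmonic forms, to show it vanishes for $q\ge1$, and to remove the closures.

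For the identification I would verify on a suitable core the energy identity $\langle\Delta_q\phi,\phi\rangle=\|d_q\phi\|^2+\|d_{q-1}^*\phi\|^2$, so that $\Delta_q\phi=0$ forces $d_q\phi=0$ and $d_{q-1}^*\phi=0$; thus $\mathfrak{h}_q=\Ker(\Delta_q)$ is exactly the space of harmonic forms. The Weitzenb\"ock identity now enters decisively: since $L=D^*D$ satisfies $\langle L\phi,\phi\rangle=\|D\phi\|^2\ge 0$, the identity $\Delta_q=L+q\,\id$ gives $\langle\Delta_q\phi,\phi\rangle\ge q\|\phi\|^2$. For $q\ge 1$ this forces $\mathfrak{h}_q=\{0\}$ outright, while for $q=0$ one has $\Delta_0=L$, whose kernel is $\Ker(D)$, i.e.\ the constants, by the ergodicity of the Ornstein--Uhlenbeck semigroup; this establishes part 3.

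To convert the closures into genuine equalities I would invoke the spectral gap. For $q\ge 1$ the bound $\Delta_q\ge q\,\id$ already gives $0\notin\mathrm{spec}(\Delta_q)$, so $\Delta_q$ is boundedly invertible; for $q=0$ the spectral gap of $L$ (the Poincar\'e inequality, reflecting the chaos grading $\mathrm{spec}(L)=\{0,1,2,\dots\}$) bounds $L$ below by $1$ on the orthogonal complement of the constants. In either case $0$ is at worst an isolated point of $\mathrm{spec}(\Delta_q)$, so $\Delta_q$ has closed range, and hence so do $d_{q-1}$ and $d_q^*$. Dropping the closures yields the stated decomposition; combining it with the splittings above and the vanishing $\mathfrak{h}_q=\{0\}$ for $q\ge1$ gives parts 1 and 2. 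Exactness at each $L^2\Gamma(\wedge^q H)^*$ is a restatement of part 1, while exactness at the two ends follows from the explicit forms of $\iota$ and $\iota^*$ together with $\Ker(d_0)=\image(\iota)=\{\text{constant functions}\}$.

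The main obstacle I anticipate is functional-analytic bookkeeping with the unbounded operators rather than any single hard estimate. One must justify the Weitzenb\"ock identity and the energy identity $\langle\Delta_q\phi,\phi\rangle=\|d_q\phi\|^2+\|d_{q-1}^*\phi\|^2$ on a common core on which $d_q$, $d_{q-1}^*$ and $L$ are simultaneously well behaved, and must ensure that $\Delta_q$ is genuinely self-adjoint (so that the spectral-gap argument applies) rather than merely symmetric. Once self-adjointness and such a core are in hand, the positivity carried by the $+q\,\id$ term does all the geometric work, and the remaining deductions are formal.
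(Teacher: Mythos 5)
Your argument is correct in outline, but it is essentially Shigekawa's original proof---the very one this paper is offering an alternative to---rather than the proof developed here. You work directly on the Wiener space with the analytic Weitzenb\"ock identity $\Delta_q=L+q\,\id$, the abstract orthogonal decompositions attached to the closed unbounded operators $d_{q-1}$ and $d_q$, and a spectral-gap argument to close the ranges; as you yourself note, the price is the functional-analytic bookkeeping (self-adjointness of $\Delta_q$, a common core on which the energy identity $\langle\Delta_q\phi,\phi\rangle=\|d_q\phi\|^2+\|d_{q-1}^*\phi\|^2$ holds). The paper instead conjugates the whole complex by the It\^o--Wiener chaos isometry $\Psi_q$ (Lemma \ref{lem:diagram}) and thereby splits it into the finite sequences $\cdots\to H_{k,q}\to H_{k-1,q+1}\to\cdots$ with $k+q=n$ fixed, on which $\breve d_q$ and $\breve d_q^*$ are bounded, purely combinatorial operators; Rawnsley's identity $\breve d_{q}^*\circ\breve d_{q}+\breve d_{q-1}\circ\breve d_{q-1}^*=(k+q)\id$ (Proposition \ref{prop:weitzenbock}) is then a short computation on elementary tensors, and exactness, closed range, and the vanishing of harmonics for $n\geq 1$ follow with no domain issues, the only analytic input being the unitarity of $\Psi$ and the commutativity of diagram (\ref{figure:psi_q}). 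Your route is self-contained on the Wiener space but takes the Weitzenb\"ock formula (\ref{eq:weitzenbock}) as a black box and reproduces the classical Hodge--Kodaira machinery; the paper's route reduces all the analysis to a single isometry and in addition yields the finer chaos-by-chaos description of the exact and co-exact forms via $H_{k,q}=H_{k,q}^+\oplus H_{k,q}^-$ (Lemma \ref{lem:L-R_decom_Hkq}), which is the paper's main new content and is invisible from your approach.
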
 
\section{A Fock-Space Interpretation}
\label{sec:fock}
We can work with skew-symmetric vector-fields instead of differential forms, which is justified by the Riesz correspondence between the Hilbert space $H$ and its dual $H^*$, and similarly between the completed tensor powers $H^{\otimes n}$ and its dual $(H^{\otimes n})^*$. 
The consequent correspondence between $L^2$ $H$-forms and skew-symmetric $L^2$ $H$-vector fields 
\[
L^2\Gamma (H^{\wedge q})^* \cong
L^2\Gamma H^{\wedge q} \cong 
L^2(E, \mu; \mathbb{R})\otimes H^{\wedge q}  
\]
allows us to define, corresponding to the exterior derivative $d_q$ on $q$-forms, an operator 
$d_{q}^\sharp:\dom(d_{q}^\sharp)\subset L^2\Gamma H^{\wedge q}\rightarrow L^2\Gamma H^{\wedge (q+1)}$ 
on skew-symmetric $q$-vector fields by  
\[
d_{q}^\sharp u_{\sigma}=(d_q u^\sharp_{\sigma})^\sharp, \quad u\in L^2\Gamma H^{\wedge q}, \,\sigma\in E,
\]
where $u^\sharp \in L^2\Gamma (\wedge^q H)^*$ is given by $u^\sharp_{\sigma}(h)=<u_{\sigma}, h>_{H^{\wedge q}}$, and $u \in \dom(d_{q}^\sharp)$ iff $u^\sharp\in \dom(d_q)$. 
Similarly for $d_q^*$, we define an operator $d_{q}^{*\sharp}$ by
\[
d_{q}^{*\sharp} u_{\sigma}=(d_{q}^* u^\sharp_{\sigma})^\sharp, \quad u\in L^2\Gamma H^{\wedge (q+1)}, \,\sigma\in E,
\]
where $u \in \dom(d_q^{*\sharp})$ iff $u^\sharp\in \dom(d_q^{*})$. 
We note that $d_q^{*\sharp} = d_q^{\sharp*}$, and that all the operators here are closed and densely defined. 
\par
We follow Shigekawa \cite{shigekawa1986rham} to define the exterior product $\wedge$ by 
\[
h_1\wedge\cdots\wedge h_q = q! \,A_{q}(h_1\otimes\cdots\otimes h_q), \quad h_1, \cdots, h_q\in H, 
\]
Similarly the symmetric product $\odot$ is defined by 
\[
h_1\odot\cdots\odot h_k = k! \, S_{k}(h_1\otimes\cdots\otimes h_k),  \quad h_1, \cdots, h_k\in H, 
\]
with the symmetrisation operator $S_k$ on $k$-tensors given by 
\[
S_k(h_1\otimes\dots\otimes h_k)=\frac{1}{k!}\sum_{\rho\in \mathfrak{S}_k} (h_{\rho(1)}\otimes\dots\otimes h_{\rho(k)}), \quad h_i \in H.
\]
As in Shigekawa \cite{shigekawa1986rham}, we use the following inner product for $H^{\wedge q}$ (instead of the usual inner product induced from $H^{\otimes q}$)
\[
<h_1\wedge\cdots\wedge h_q, g_1\wedge\cdots\wedge g_q> = \det(<h_i, g_j>), \quad h_1, \cdots, h_q, g_1, \cdots, g_q \in H, 
\]
and similarly for $H^{\odot k}$,
\[
<h_1\odot\cdots\odot h_k, g_1\odot\cdots\odot g_k> = \sum_{\rho\in \mathfrak{S}_k} \prod_{i=1}^k <h_i, g_{\rho(i)}>.
\]
\par
The well-known isometry between the symmetric (or Boson) Fock space over $H$, 
$\mathbf{F}_s(H) = \bigoplus_{k=0}^{\infty} H^{\odot k}$, and the $L^2$ Gaussian space, $L^2(E, \gamma; \mathbb{R})$, 
\[
\Psi: \mathbf{F}_s(H) \cong L^2(E, \mu; \mathbb{R}), 
\]
is defined by (see, e.g., \cite{neveu1968processus})
\begin{equation}
\label{eq:psi}
\Psi (\exp\odot h)= \exp(I(h)-\frac{1}{2}\|h\|^2), \quad h\in H, 
\end{equation}
where 
\[
\exp\odot h = 
\sum_{k=0}^{\infty}\frac{1}{k!}h^{\odot k}\in\bigoplus_{k=0}^{\infty} H^{\odot k}. 
\]
Recall that these exponential vectors $\{\exp\odot h\}_{h\in H}$ form a total subset of $\mathbf{F}_s(H)$.  
The isometry $\Psi$ can be extended to skew-symmetric $L^2$ $H$-vector fields to obtain isomorphisms 
\[
\Psi_q: \mathbf{F}_s(H) \otimes  H^{\wedge q}\cong  L^2(E, \mu; \mathbb{R})\otimes H^{\wedge q}, 
\]
if we set, for each $q\in \mathbb{N}$,  
\begin{equation}
\label{eq:psi_q}
\Psi_q = \Psi\otimes \id_{H^{\wedge q}}. 
\end{equation}  
\par
To relate to the Hodge theory on our abstract Wiener space, we first observe that, if the following diagram commutes  
\begin{equation}
\label{figure:psi_q}
\xymatrix{
{\bigoplus_{k=0}^{\infty} H^{\odot k}\otimes  H^{\wedge q}} 
\ar[r]^{\Psi_q } 
\ar[d]_{\breve d_{q}}
& {L^2(E, \mu; \mathbb{R})\otimes  H^{\wedge q}} 
\ar[d]_{ d_{q}^\sharp}
\ar[r]^{\cong}
&  {L^2\Gamma(H^{\wedge q}} )^*
\ar[d]_{ d_{q}}
\\
{
\bigoplus_{k=1}^{\infty} H^{\odot k-1}\otimes  H^{\wedge (q+1)}} 
\ar[r]^{\Psi_{q+1} } 
& {L^2(E,\mu; \mathbb{R})\otimes H^{\wedge (q+1)}}
\ar[r]^{\cong}
&  {L^2\Gamma(H^{\wedge (q+1)}} )^*,
} 
\end{equation}
it reduces the study of the exterior derivative $d_q$ and its adjoint $d_q^*$ on the abstract Wiener space $(E, H, \mu)$, which are on the right side of the diagram, to that of their counterparts on the extended Fock spaces, on the left side.  
We start by defining the operator $\breve d_q$ on $\mathbf{F}_s(H) \otimes H^{\wedge q}$, and its adjoint operator $\breve d_q^{*}$ on $\mathbf{F}_s(H) \otimes  H^{\wedge (q+1)}$, via their restrictions on each $k$-th chaos. 
\begin{defn}
\label{def:breve_d_q}
Given $k, q \in\mathbb{N} $, and $h_1, \cdots, h_k, x_1, \cdots, x_q, x_{q+1} \in H$, 
we set 
$\breve d_{q} (x_1\wedge \cdots \wedge x_q )=0$, 
\begin{eqnarray*}
&& \breve d_{q} (h_1\odot \cdots \odot h_k \otimes  x_1\wedge \cdots \wedge x_q )\\
&=& \sum_{j=1}^k h_1\odot \cdots \odot \hat h_j \odot \cdots \odot h_k\otimes  h_j \wedge x_1\wedge\cdots\wedge x_q,
\end{eqnarray*}
where $\hat{}$ denotes omission, and 
\begin{eqnarray*}
& & \breve d_{q}^* (h_1\odot \cdots \odot h_k \otimes  x_1\wedge \cdots \wedge x_{q+1})\\
&=& \sum_{i=1}^{q+1} (-1)^{i-1} h_1\odot \cdots \odot h_k\odot x_i \otimes x_1\wedge\cdots\wedge\hat x_i\wedge\cdots\wedge x_{q+1}.
\end{eqnarray*}
\end{defn}
For brevity, we use the shorthand notation $H_{k,q}= H^{\odot k}\otimes H^{\wedge q}$  
for this subspace of $H^{\otimes (k+q)}$. 
It is clear that, up to constant multiples, the operator $\breve d_q$ is the composition of the inclusion of $H_{k,q}$ into $H^{\otimes (k+q)}$ followed by the projection onto $H_{k-1,q+1}$.  
It is not difficult to verify the following properties: 
\begin{enumerate}[(a)]
\item $\breve d_{q+1}\circ \breve d_{q} =0, \quad  \breve d_{q}^*\circ \breve d_{q+1}^* =0$; 
\item $\breve d_q|_{H_{k,q}}= (q+1) \id_{H^{\odot (k-1)}}\otimes A_{q+1}, \quad \breve d_q (H_{k,q}) \subset H_{k-1, q+1}$; and 
\item $\breve d_q^*|_{H_{k-1,q+1}}= k S_k\otimes \id_{H^{\wedge q}}, \quad \breve d_q^* (H_{k-1,q+1}) \subset H_{k, q}$.
\end{enumerate}
Using (b) and (c), we can check that $\breve d_{q}$ and $\breve d_{q}^*$ are adjoint to each other. 
\begin{lem}
\label{lem:diagram} 
The diagram (\ref{figure:psi_q}) commutes, and  
\begin{equation}
\label{eq:breve_d_q}
\breve d_q [(\exp\odot h)\otimes x]= (\exp\odot h)\otimes (h\wedge x), \quad \forall h\in H, x\in H^{\wedge q}.
\end{equation}
\end{lem}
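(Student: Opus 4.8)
The plan is to verify the commutativity of (\ref{figure:psi_q}) one square at a time. The right-hand square commutes by construction: its horizontal arrows are the Riesz identifications $L^2\Gamma H^{\wedge q}\cong L^2\Gamma(H^{\wedge q})^*$, and $d_q^\sharp$ was defined precisely as the conjugate $\sharp\circ d_q\circ\sharp^{-1}$, so there is nothing to check there. Hence the entire content lies in the left square, namely the operator identity $\Psi_{q+1}\circ\breve d_q=d_q^\sharp\circ\Psi_q$. Since the exponential vectors $\{\exp\odot h\}_{h\in H}$ form a total subset of $\mathbf{F}_s(H)$, lie in the relevant domains, and all the operators in sight are closed, it suffices to check this identity on the family $(\exp\odot h)\otimes x$ with $h\in H$ and $x\in H^{\wedge q}$; the formula (\ref{eq:breve_d_q}) is exactly what renders this family tractable, so I would establish it first.

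To prove (\ref{eq:breve_d_q}) I would work chaos by chaos. Expanding $\exp\odot h=\sum_{k\ge 0}\tfrac{1}{k!}h^{\odot k}$ and applying Definition \ref{def:breve_d_q} to the homogeneous term $h^{\odot k}\otimes x$, all $k$ summands coincide because every omitted factor equals $h$, giving $\breve d_q(h^{\odot k}\otimes x)=k\,h^{\odot(k-1)}\otimes(h\wedge x)$. Dividing by $k!$ and re-indexing $k\mapsto k+1$ telescopes the series back into $(\exp\odot h)\otimes(h\wedge x)$, which is (\ref{eq:breve_d_q}); the degree-zero term contributes nothing since $\breve d_q$ annihilates pure skew tensors. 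Applying $\Psi_{q+1}=\Psi\otimes\id_{H^{\wedge(q+1)}}$ together with (\ref{eq:psi}) then yields $\Psi_{q+1}\breve d_q[(\exp\odot h)\otimes x]=\exp\!\big(I(h)-\tfrac12\|h\|^2\big)\otimes(h\wedge x)$.

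For the other route I would compute $d_q^\sharp\Psi_q[(\exp\odot h)\otimes x]=d_q^\sharp(F_h\otimes x)$, where $F_h:=\exp\!\big(I(h)-\tfrac12\|h\|^2\big)$. Two ingredients enter: (i) the gradient of the stochastic exponential, $DF_h=F_h\,h$, which follows from $DI(h)=h$ and the chain rule; and (ii) the fact that on a field $F\otimes x$ with $x\in H^{\wedge q}$ constant, the definition $d_q=(q+1)A_{q+1}D$ transported through $\sharp$ reads $d_q^\sharp(F\otimes x)=\nabla F\wedge x$, with $\nabla F$ the $H$-gradient dual to $DF$. Point (ii) is where I expect the bookkeeping to bite: one must check that the constant $(q+1)$ in $d_q$, combined with the convention $h_1\wedge\cdots\wedge h_q=q!\,A_q(h_1\otimes\cdots\otimes h_q)$ and the determinant inner product on $H^{\wedge q}$, produces exactly $\nabla F\wedge x$ with no stray factor. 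This rests on the elementary identity $A_{q+1}\circ(\id\otimes A_q)=A_{q+1}$ (restriction of the averaging to the subgroup $\mathfrak{S}_q\subset\mathfrak{S}_{q+1}$) together with the compatibility of $A_{q+1}$ with the Riesz map $\sharp$. Granting (i) and (ii), $d_q^\sharp(F_h\otimes x)=(F_h\,h)\wedge x=F_h\otimes(h\wedge x)$, matching the first computation; totality of the exponential vectors and closedness of the operators then upgrade this pointwise agreement to commutativity of the whole diagram.
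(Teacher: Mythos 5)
Your proposal is correct and follows essentially the same route as the paper: establish (\ref{eq:breve_d_q}) by expanding $\exp\odot h$ chaos by chaos (the $k$ identical summands giving $k\,h^{\odot(k-1)}\otimes(h\wedge x)$ and the series telescoping), then compute $d_q^{\sharp}\Psi_q[(\exp\odot h)\otimes x]$ via $D\exp(I(h)-\tfrac12\|h\|^2)=\exp(I(h)-\tfrac12\|h\|^2)\,h$ and the identity $(q+1)A_{q+1}(h\otimes x)=h\wedge x$, and conclude by totality of the exponential vectors. Your extra care with the constant $(q+1)$ versus the convention $h_1\wedge\cdots\wedge h_q=q!\,A_q(h_1\otimes\cdots\otimes h_q)$ is exactly the bookkeeping the paper performs implicitly in the line $(q+1)A_{q+1}[\cdots\otimes h\otimes x]=\cdots\otimes(h\wedge x)$.
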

\begin{proof}
Given any $h\in H$ and $x\in H^{\wedge q}$, we verify that 
\begin{eqnarray*}
\breve d_q[(\exp\odot h)\otimes x]
&=& \breve d_q [\sum_{k=0}^{\infty}\frac{1}{k!}h^{\odot k} \otimes x]\\
&=& \sum_{k=1}^{\infty}\frac{1}{(k-1)!}h^{\odot (k-1)} \otimes h\wedge x\\
&=& (\exp\odot h)\otimes (h\wedge x).
\end{eqnarray*}
The definitions (\ref{eq:psi}) and (\ref{eq:psi_q}) give
\[
\Psi_q [(\exp\odot h)\otimes x] = \exp(I(h)-\frac{1}{2}\|h\|^2)\otimes x, \quad\forall h\in H, x\in H^{\wedge q}, 
\]
so we have
\begin{eqnarray*}
d_q^{\sharp}\Psi_q [(\exp\odot h)\otimes x]
&=& d_q^{\sharp}[\exp(I(h)-\frac{1}{2}\|h\|^2)\otimes x]\\
&=& (q+1)A_{q+1}[\exp(I(h)-\frac{1}{2}\|h\|^2)\otimes h\otimes x]\\
&=& \exp(I(h)-\frac{1}{2}\|h\|^2)\otimes (h\wedge x)\\
&=& \Psi_{q+1} [(\exp\odot h)\otimes (h\wedge x)]\\
&=& \Psi_{q+1}\breve d_q [(\exp\odot h)\otimes x].
\end{eqnarray*}
Therefore, the diagram indeed commutes.
\end{proof}
To understand how the operators $\breve d_q$ and  $\breve d_q^{*}$ interact when they map into different components of $H_{k, q}$ for varying integers $k$ and $q$, we quote the following result of J. Rawnsley, a version of which appeared in \cite{bieliavsky2005symplectic} for the case of $H$ being a finite-dimensional vector space. 
The proof given below was shown to us by J. Rawnsley; it is simple and instructive, and does not depend on the dimension of $H$. 
Let's fix  $k$ and $q$, and set $n=k+q$. 
\begin{prop}[Rawnsley]
\label{prop:weitzenbock}
The following sequences are exact:
\[
0 \rightarrow H^{\odot n}
\xrightarrow{\breve d_0} \cdots \xrightarrow{\breve d_{q-1}} 
H_{k, q}
\xrightarrow{\breve d_q} 
H_{k-1, q+1}
\xrightarrow{\breve d_{q+1}} \cdots \xrightarrow{\breve d_n} 
H^{\wedge n} \rightarrow 0
\]
and 
\[
0 \rightarrow H^{\wedge n}
\xrightarrow{\breve d_n^*} \cdots \xrightarrow{\breve d_{q+1}^*} 
H_{k-1, q+1}
\xrightarrow{\breve d_q^*} 
H_{k, q}
\xrightarrow{\breve d_{q-1}^*} \cdots \xrightarrow{\breve d_0^*} 
H^{\odot n} \rightarrow 0.
\]
\end{prop}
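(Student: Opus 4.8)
The plan is to transplant Shigekawa's Hodge-theoretic strategy directly onto the Fock side, by establishing a Weitzenb\"ock identity for the pair $(\breve d, \breve d^*)$ and then reading off exactness from it. Property (a) already gives $\image(\breve d_{q-1})\subseteq\Ker(\breve d_q)$ at every interior node, so the whole content of the first sequence is the reverse inclusion together with injectivity of $\breve d_0$ at the symmetric end and surjectivity of the last differential at the skew end. Within a fixed $n$-complex every map is a bounded operator between (pre-)Hilbert spaces --- each $\breve d_q$ is, up to the constant $q+1$, the antisymmetriser $A_{q+1}$ followed by the projection onto $H_{k-1,q+1}$, by property (b) --- so no domain or closedness subtleties intervene and a clean finite-type Hodge decomposition is available.

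The key step is to prove
\[
\breve d_q^*\breve d_q + \breve d_{q-1}\breve d_{q-1}^* = n\,\id \quad\text{on } H_{k,q}, \qquad k+q=n,\ n\ge 1 .
\]
The most transparent route is second quantisation. Fix an orthonormal basis $\{e_i\}$ of $H$ and let $a_i^*,a_i$ be the bosonic creation/annihilation operators on the symmetric factor and $b_i^*,b_i$ the fermionic ones on the skew factor; the inner products adopted in Section \ref{sec:fock} are precisely those for which $[a_i,a_j^*]=\delta_{ij}$, $\{b_i,b_j^*\}=\delta_{ij}$, and the bosonic and fermionic modes commute. Since $\breve d$ moves one vector from the symmetric part to the skew part, it is represented by $\breve d=\sum_i b_i^* a_i$ and hence $\breve d^*=\sum_i a_i^* b_i$; this is confirmed by the coherent-state formula (\ref{eq:breve_d_q}) of Lemma \ref{lem:diagram}, as $a_i(\exp\odot h)=\langle e_i,h\rangle\,\exp\odot h$ gives $\sum_i b_i^* a_i[(\exp\odot h)\otimes x]=(\exp\odot h)\otimes(h\wedge x)$. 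Expanding $\breve d^*\breve d+\breve d\,\breve d^*$ and applying the commutation and anticommutation relations, the diagonal terms assemble into the number operators $\sum_i a_i^* a_i+\sum_i b_i^* b_i=N_B+N_F$, which acts as $(k+q)\,\id=n\,\id$ on $H_{k,q}$, while the remaining quartic cross-terms cancel after a single relabelling $i\leftrightarrow j$ (using boson--fermion commutativity). One could instead derive the same identity combinatorially from properties (b) and (c) by composing $S_k$ with $A_{q+1}$, but the operator computation avoids that bookkeeping.

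With $\breve\Delta:=\breve d^*\breve d+\breve d\,\breve d^*=n\,\id$ secured, exactness is immediate. At an interior node any $\omega\in\Ker(\breve d_q)$ satisfies $n\,\omega=\breve d_{q-1}\big(\breve d_{q-1}^*\omega\big)$, so $\omega\in\image(\breve d_{q-1})$; at the symmetric end $\breve d_0^*\breve d_0=n\,\id$ on $H^{\odot n}$ forces $\breve d_0$ to be injective; and at the skew end $\breve d\,\breve d^*=n\,\id$ on $H^{\wedge n}$ exhibits every $\eta\in H^{\wedge n}$ as $\tfrac1n\,\breve d(\breve d^*\eta)$, so the final differential is onto. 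Because $\breve d_q$ and $\breve d_q^*$ are mutually adjoint and the identical identity $\breve d\,\breve d^*+\breve d^*\breve d=n\,\id$ governs both, the second (downward) sequence is the adjoint complex of the first and is dispatched verbatim by the same Hodge argument.

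The only genuine obstacle I anticipate is the verification of the Weitzenb\"ock identity, and within it the cancellation of the mixed quartic terms in $\breve d^*\breve d+\breve d\,\breve d^*$: this is where the interplay between the bosonic commutators and the fermionic anticommutators does the real work, and it is the exact Fock-side analogue of Shigekawa's curvature term $q\,\id$ in (\ref{eq:weitzenbock}). Once that identity is in hand, everything else is formal.
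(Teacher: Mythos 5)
Your proof is correct and follows essentially the same route as the paper: both hinge on establishing the Weitzenb\"ock-type identity $\breve d_q^*\breve d_q+\breve d_{q-1}\breve d_{q-1}^*=(k+q)\,\id$ on $H_{k,q}$ (equation (\ref{eq:breve_weitzenbock})) and then reading off exactness from the invertibility of $\breve d^*\breve d+\breve d\,\breve d^*$ on each kernel. The only difference is presentational --- you organise the cancellation of the cross terms via the CCR/CAR relations for $\sum_i b_i^*a_i$ and $\sum_i a_i^*b_i$, whereas the paper expands $\breve d_q^*\breve d_q$ and $\breve d_{q-1}\breve d_{q-1}^*$ directly on decomposable tensors and observes the same cancellation; your explicit treatment of the endpoint cases (injectivity of $\breve d_0$, surjectivity of the last map) is a welcome addition that the paper leaves implicit.
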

\begin{proof}
First recall from Property (a) that $\breve d_{q+1}\circ \breve d_{q} =0$ and $\breve d_{q}^*\circ \breve d_{q+1}^* =0$.  
Given any $h_1, \cdots, h_k, x_1, \cdots, x_q \in H$, we have
\begin{eqnarray*}
& & \breve d_{q}^*\circ \breve d_{q} (h_1\odot \cdots \odot h_k \otimes  x_1\wedge \cdots \wedge x_q )\\
&=& \breve d_{q}^*[ \sum_{j=1}^k h_1\odot \cdots \odot \hat h_j \odot \cdots \odot h_k\otimes  h_j \wedge x_1\wedge\cdots\wedge x_q]\\
&=& \sum_{j=1}^k [ h_1\odot \cdots \odot \hat h_j \odot \cdots \odot h_k\odot  h_j \otimes  x_1\wedge\cdots\wedge x_q\\
& &+ \sum_{i=1}^{q} (-1)^{i} h_1\odot \cdots \odot \hat h_j \odot \cdots \odot h_k\odot x_i\otimes  h_j \wedge x_1\wedge\cdots\wedge\hat x_i\wedge\cdots\wedge x_q], 
\end{eqnarray*}
and 
\begin{eqnarray*}
& &\breve d_{q-1}\circ \breve d_{q-1}^*(h_1\odot \cdots \odot h_k \otimes  x_1\wedge \cdots \wedge x_q )\\
&=& \breve d_{q-1}[\sum_{i=1}^q (-1)^{i-1} h_1\odot \cdots \odot h_k\odot x_i \otimes x_1\wedge\cdots\wedge\hat x_i\wedge\cdots\wedge x_q]\\
&=&\sum_{i=1}^q (-1)^{i-1} [ h_1\odot \cdots \odot h_k\otimes x_i \wedge x_1\wedge\cdots\wedge\hat x_i\wedge\cdots\wedge x_q\\
& &+ \sum_{j=1}^k h_1\odot \cdots \odot \hat h_j \odot \cdots \odot h_k\odot x_i \otimes h_j\wedge x_1\wedge\cdots\wedge\hat x_i\wedge\cdots\wedge x_q].
\end{eqnarray*}
Therefore, we have the following identity on $H_{k, q}$: 
\begin{equation}
\label{eq:breve_weitzenbock}
\breve d_{q}^*\circ \breve d_{q} + \breve d_{q-1}\circ \breve d_{q-1}^* = (k + q) \id.
\end{equation}
So $\breve d$ and $\breve d^*$ are invertible on the kernel of each other, proving exactness.
\end{proof}
\begin{re}
In particular, Proposition \ref{prop:weitzenbock} shows that $\breve d$ has closed range. 
\end{re}
\begin{re}
Since $\Psi_q$'s are isomorphisms for all $q\in\mathbb{N}$, Proposition \ref{prop:weitzenbock} shows that the kernel of $d$ and the kernel of $d^*$ are disjoint, which leads to the vanishing result in Theorem \ref{th:shigekawa}.   
\end{re}
\begin{re}
It is not difficult to notice the similarity between 
equation (\ref{eq:breve_weitzenbock}) and Shigekawa's Weitzenb\"ock identity (\ref{eq:weitzenbock}). Recall that 
$L=D^*D$ corresponds to the number operators on the Fock space and, when acting on elements of $H_{k, q}$, gives the term $k\id$ on the right-hand side of (\ref{eq:breve_weitzenbock}), 
which explains the difference between (\ref{eq:breve_weitzenbock}) and (\ref{eq:weitzenbock}). 
\end{re}
A bit more can be said of the kernels and images of $\breve d$ and $\breve d^*$ in the Fock space setting; note that the  images of $\breve d$ and $\breve d^*$ correspond to the exact and co-exact forms, respectively, in the abstract Wiener space. 
For a fixed $k$, the exact sequences in Proposition \ref{prop:weitzenbock} can be viewed as 
the restriction of the two sequences in Theorem \ref{th:shigekawa}. 
The following lemma gives a direct-sum decomposition of the tensor product $H^{\odot k}\otimes H^{\wedge q}$, considered as a subspace of $H^{\otimes n}$, which shows how the exact sequences split.  But we first need to study a more general subspace of $H^{\otimes n}$. 
Recall that $H_{k,q}=H^{\odot k}\otimes H^{\wedge q}$ is the subspace of elements of $H^{\otimes n}$ symmetric in the \emph{first} $k$ components and skew-symmetric in the last $q$ components. 
Denote by $H^{\odot [k], \wedge [q]}$ the 
closed linear span of elements of $H^{\otimes n}$ symmetric in \emph{any} $k$ components and skew-symmetric in the remaining $q$ components; note that we are not fixing the order of the symmetric and skew-symmetric parts with respect to each other inside the tensor product. 
We use the following shorthand notation from now on: 
$H_{k,q}^+= H_{k,q}\cap H^{\odot [k+1],\wedge [q-1]}$, and 
$H_{k,q}^-= H_{k, q}\cap H^{\odot [k-1], \wedge [q+1]}$.
\begin{lem}
\label{lem:L-R_decom_Hkq}
Given $k, q\in\mathbb{N}$, let $n=k+q$. We have the decomposition \begin{equation}
\label{eq:L-R_decom_Hkq}
H_{k,q}= H_{k,q}^+\oplus H_{k,q}^-,
\end{equation}
where the equality is understood to take place inside 
$H^{\otimes n}$. 
\end{lem}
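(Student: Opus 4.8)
The plan is to diagonalise the situation with Schur--Weyl duality and read off the two summands as the two irreducible $GL(H)$-pieces of $H_{k,q}$. For each fixed $n$ one has the decomposition $H^{\otimes n}=\bigoplus_{\lambda\vdash n}\mathbf{S}_\lambda(H)\otimes\mathrm{Sp}^{\lambda}$, where $\mathbf{S}_\lambda(H)$ is the Schur functor (an irreducible polynomial $GL(H)$-module, nonzero for every $\lambda$ since $\dim H=\infty$) and $\mathrm{Sp}^{\lambda}$ is the Specht module, an irreducible $\mathfrak{S}_n$-module; the $\mathbf{S}_\lambda$-isotypic component of $H^{\otimes n}$ is exactly $\mathbf{S}_\lambda(H)\otimes\mathrm{Sp}^{\lambda}$. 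This applies verbatim on the completed Hilbert tensor power, the components being mutually orthogonal closed subspaces.

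First I would compute the $GL(H)$-content of $H_{k,q}=\mathbf{S}_{(k)}(H)\otimes\mathbf{S}_{(1^q)}(H)$ by Pieri's rule (adding a vertical strip of $q$ boxes to a single row of length $k$), obtaining $H_{k,q}\cong\mathbf{S}_{(k+1,1^{q-1})}(H)\oplus\mathbf{S}_{(k,1^q)}(H)$, each with multiplicity one. Because these two partitions are distinct, the corresponding isotypic parts of $H_{k,q}$ --- call them $V^{+}$ (type $(k+1,1^{q-1})$) and $V^{-}$ (type $(k,1^q)$) --- meet only in $0$, and $H_{k,q}=V^{+}\oplus V^{-}$ automatically. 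It therefore suffices to identify $H_{k,q}^{+}=V^{+}$ and $H_{k,q}^{-}=V^{-}$.

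Next I would pin down the ambient spaces $H^{\odot[k+1],\wedge[q-1]}$ and $H^{\odot[k-1],\wedge[q+1]}$. Each is $GL(H)$-invariant and, being the linear span over all of $\mathfrak{S}_n$ of a single block-space $H_{k\pm1,q\mp1}$, is also $\mathfrak{S}_n$-invariant; hence each is a sum of \emph{full} isotypic components $\mathbf{S}_\lambda(H)\otimes\mathrm{Sp}^{\lambda}$. Indeed, a subspace invariant under both $GL(H)$ and $\mathfrak{S}_n$ meets each component $\mathbf{S}_\lambda(H)\otimes\mathrm{Sp}^{\lambda}$ in a subspace of the form $\mathbf{S}_\lambda(H)\otimes W$ with $W\subseteq\mathrm{Sp}^{\lambda}$ an $\mathfrak{S}_n$-submodule, so $W$ is $0$ or all of $\mathrm{Sp}^{\lambda}$ by irreducibility. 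Running Pieri once more gives $H_{k+1,q-1}\cong\mathbf{S}_{(k+2,1^{q-2})}(H)\oplus\mathbf{S}_{(k+1,1^{q-1})}(H)$ and $H_{k-1,q+1}\cong\mathbf{S}_{(k,1^q)}(H)\oplus\mathbf{S}_{(k-1,1^{q+1})}(H)$, so $H^{\odot[k+1],\wedge[q-1]}$ is the sum of the isotypic components for $(k+2,1^{q-2})$ and $(k+1,1^{q-1})$, while $H^{\odot[k-1],\wedge[q+1]}$ is the sum of those for $(k,1^q)$ and $(k-1,1^{q+1})$. Intersecting with $H_{k,q}$ then finishes the argument: the only types occurring in $H_{k,q}$ are $(k+1,1^{q-1})$ and $(k,1^q)$, and the full $(k+1,1^{q-1})$-component lies in $H^{\odot[k+1],\wedge[q-1]}$ whereas the $(k,1^q)$-component does not, so $H_{k,q}^{+}=H_{k,q}\cap H^{\odot[k+1],\wedge[q-1]}=V^{+}$; symmetrically $H_{k,q}^{-}=V^{-}$, which is (\ref{eq:L-R_decom_Hkq}).

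The step I expect to be the main obstacle is the identification of the spaces $H^{\odot[\cdot],\wedge[\cdot]}$ with sums of full isotypic components: it is the only place where both symmetries must be invoked together, and it is also where the infinite-dimensionality of $H$ (guaranteeing that every relevant Schur functor is nonzero) and the boundary conventions $H^{\odot[k+1],\wedge[-1]}=H^{\odot[-1],\wedge[q+1]}=0$ for $q=0$ or $k=0$ must be handled with care, since then one summand degenerates to $0$. As a consistency check, the same $GL(H)$-equivariance shows $\image(\breve d_{q-1})=V^{+}$ and $\image(\breve d_q^{*})=V^{-}$ --- each map can be nonzero only on the common Schur component of its source and target --- so (\ref{eq:L-R_decom_Hkq}) coincides with the Hodge decomposition $H_{k,q}=\image(\breve d_{q-1})\oplus\image(\breve d_q^{*})$ furnished by Proposition \ref{prop:weitzenbock}, which is the sense in which it exhibits the splitting of the exact sequences.
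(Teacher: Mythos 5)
Your argument is correct, but it takes a genuinely different route from the paper's. The paper does not decompose $H_{k,q}$ directly: it first proves the stronger Lemma \ref{lem:L-R_decom_H} for the $\mathfrak{S}_n$-invariant span $H^{\odot[k],\wedge[q]}$, using only the symmetric-group action --- each orbit span $V_{O_b}$ of a basis element carries the induced module $[k][1^q]$, which splits as $[k+1,1^{q-1}]\oplus[k,1^q]$ by Littlewood--Richardson, and the two summands are located by exhibiting explicit vectors $v^+=\frac{1}{k+1}\sum_l\tau_{l,k+1}b$ and $v^-=\frac{1}{q+1}(b-\sum_m\tau_{k,m}b)$ --- and then projects down to $H_{k,q}=H^{\odot\mathbf{a},\wedge\mathbf{a}^c}$ with $A_{\mathbf{a}^c}S_{\mathbf{a}}$ (Corollary \ref{cor:L-R_decom_H_a}). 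You instead invoke Schur--Weyl duality on $H^{\otimes n}$ and Pieri's rule to split $H_{k,q}$ itself into its two $GL(H)$-isotypic pieces, and identify $H^{\odot[k\pm1],\wedge[q\mp1]}$ as sums of full bi-isotypic components $\mathbf{S}_\lambda(H)\otimes\mathrm{Sp}^\lambda$; the intersection computation is then immediate. Your route is shorter and cleanly sidesteps the paper's delicate bookkeeping about the orbit spans $V_{O_b}$ overlapping one another along irreducible components, and it gives the identifications $H_{k,q}^+=\image(\breve d_{q-1})$, $H_{k,q}^-=\image(\breve d_q^*)$ essentially for free; the price is the extra input of the commutant form of Schur--Weyl duality on the completed Hilbert tensor power (the statement that closed subspaces invariant under both $\mathfrak{S}_n$ and all $A^{\otimes n}$ are sums of full components), which you should justify or cite rather than assert, since it is the one place where the infinite-dimensional completion genuinely matters. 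The paper's approach, by contrast, uses only finite-dimensional representation theory of the finite group $\mathfrak{S}_n$, produces explicit projection vectors, and yields the more general decomposition of $H^{\odot[k],\wedge[q]}$ (Hamernik's result), which your argument does not directly recover. The degenerate cases $k=0$ or $q=0$, which you rightly flag, are handled identically in both approaches (one summand vanishes because only one partition survives).
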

Lemma \ref{lem:L-R_decom_Hkq} is a special case of Corollary \ref{cor:L-R_decom_H_a} in the next section, for which a representation-theoretic proof is given. 
The decomposition (\ref{eq:L-R_decom_Hkq}) gives a concrete description of the kernels and images of $\breve d$ and $\breve d^*$, and, together with Proposition \ref{prop:weitzenbock}, allows us to state 
an analogue of Theorem \ref{th:shigekawa}:
\begin{enumerate}
\item $H_{k,q}^+ =\image(\breve d_{q-1}|_{H_{k+1,q-1}})= \Ker(\breve d_{q}|_{H_{k,q}})$;
\item $H_{k,q}^-= \image(\breve d_{q}^*|_{H_{k-1,q+1}})= \Ker(\breve d_{q-1}^*|_{H_{k,q}})$; and 
\item $\Ker(\breve d_{q-1}^*|_{ H_{k,q}}) \cap \Ker(\breve d_{q}|_{ H_{k,q}})= \{0\}$.
\end{enumerate}
It is not difficult to verify that $H_{k,q}^+ \subset \Ker(\breve d_{q}|_{H_{k,q}})$, since $\breve d_{q}$ is, as shown by Property (b), basically a skew-symmetrisation operator and is annihilated by any element of $H_{k,q}$ symmetric in at least two of its last $(q+1)$-components.  Similarly we see $H_{k,q}^-\subset \Ker(\breve d_{q-1}^*|_{H_{k,q}})$.  Proposition \ref{prop:weitzenbock} proves the disjointness of the kernels of $\breve d_{q}|_{H_{k,q}}$ and $\breve d_{q-1}^*|_{H_{k,q}}$, so indeed $H_{k,q}^+ =\Ker(\breve d_{q}|_{H_{k,q}})$, $H_{k,q}^-=\Ker(\breve d_{q-1}^*|_{H_{k,q}})$, and the rest is clear. 
The above can be more succinctly described by the following 
diagram of long and short exact sequences: 
\begin{displaymath}
\xymatrix{
        &                            &0\ar[d]                 &  0                 &  0\ar[d]                           &
\\
        &                            &H_{k+1,q-1}^-\ar[d]\ar@/^/[r]^{\breve d_{q-1}} &H_{k,q}^+\ar[u] \ar@/^/[l]^{\breve d_{q-1}^*}   &H_{k-1,q+1}^-\ar[d]    &
\\
0\ar[r]& H_{n,0}\ar[r]              & \cdots H_{k+1,q-1}\ar@/^/[r]^{\breve d_{q-1}}\ar[d]  & H_{k,q} \ar@/^/[l]^{\breve d_{q-1}^*}  \ar@/^/[r]^{\breve d_{q}} \ar[u] & H_{k-1,q+1}\ar[r]\ar[d]\ar@/^/[l]^{\breve d_{q}^*}     \cdots&  H_{0,n}\ar[r] & 0.
\\
        &                            &H_{k+1,q-1}^+\ar[d]     & H_{k,q}^-\ar[u]\ar@/^/[r]^{\breve d_{q}} &H_{k-1,q+1}^+\ar[d] \ar@/^/[l]^{\breve d_{q}^*}      & 
\\
        &                            &0                      &  0\ar[u]              &  0                      & 
} 
\end{displaymath}
\section{A Representation-Theoretic Proof}
\label{sec:decomp}
To prove Lemma \ref{lem:L-R_decom_Hkq}, we first note that the symmetric group of degree $n$, $\mathfrak{S}_n$, acts naturally on $H^{\otimes n}$ by permuting the $n$ components. 
The vector subspace $H_{k,q}$ is not an $\mathfrak{S}_n$-invariant subspaces of $H^{\otimes n}$, but $H^{\odot [k], \wedge [q]}$ is. 
\par
To give a more specific description of $H^{\odot [k], \wedge [q]}$, let $\mathbf{n}=\{1, 2, \cdots, n\}$, and 
define the set of the $k$-subsets of $\mathbf{n}$
\[
\mathbf{n}^{[k]} = \{\mathbf{a}\subseteq \mathbf{n}| |\mathbf{a}|=k\}.
\]
For each $\mathbf{a}\in \mathbf{n}^{[k]}$, denote by $\mathbf{a}^c$ its complement in $\mathbf{n}$, and by $H^{\odot \mathbf{a}, \wedge \mathbf{a}^c}$ the subspace of elements of $H^{\otimes n}$ symmetric in the chosen components, specified by $\mathbf{a}$, and skew-symmetric in the remaining components, specified by $\mathbf{a}^c$.  Similarly we have $H^{\odot \mathbf{a}, \otimes \mathbf{a}^c}$, the subspace of elements of $H^{\otimes n}$ only restricted to be symmetric in the chosen components specified by $\mathbf{a}$, and $H^{\wedge \mathbf{a}, \otimes \mathbf{a}^c}$, the subspace of elements of $H^{\otimes n}$ skew-symmetric in the chosen components specified by $\mathbf{a}$. For example, $H_{k,q}=H^{\odot k}\otimes H^{\wedge q}$ correspond to the choice of $\mathbf{a}=\{1, \cdots, k\}$, and hence $\mathbf{a}^c=\{k+1, \cdots, n\}$, so in this notation it can be written as $H^{\odot \{1, \cdots, k\}, \wedge \{k+1, \cdots, n\}}$.
\par
The subspace $H^{\odot [k], \wedge [q]}$ is the span  
of all $C(n, k)$ 
subspaces $H^{\odot \mathbf{a}, \wedge \mathbf{a}^c}$, corresponding to the $C(n, k)$ possible choices of $\mathbf{a}$ in $\mathbf{n}^{[k]}$, of  
elements invariant under a permutation of a specific set of $k$ variables, and anti-invariant under a permutation of the rest. 
For each choice, say, $\mathbf{a}$ and hence $\mathbf{a}^c$, of the $k$ and $q$ variables, the action of the corresponding $\mathfrak{S}_k\times \mathfrak{S}_q$ stabilises $H^{\odot \mathbf{a}, \wedge \mathbf{a}^c}$, 
while the elements of $\mathfrak{S}_n/(\mathfrak{S}_k\times \mathfrak{S}_q)$ permute the spaces $H^{\odot \mathbf{a}, \wedge \mathbf{a}^c}$ with different choices of $\mathbf{a}$'s. 
The structure of $H^{\odot [k], \wedge [q]}$ is similar to that of the representation induced from $H^{\odot \mathbf{a}, \wedge \mathbf{a}^c }$, but the spaces $H^{\odot \mathbf{a}, \wedge \mathbf{a}^c }$ with different choices of $\mathbf{a}$'s in general can intersect non-trivially, and therefore may not form a direct sum. \par
Recall the symmetrisation and skew-symmetrisation operators $S_{n}$ and $A_{n}$, which project elements of $H^{\otimes n}$ onto the closed subspaces  $H^{\odot n}$ and $H^{\wedge n}$, respectively. 
Corresponding to an element $\mathbf{a}\in \mathbf{n}^{[k]}$, we define the operator $S_{\mathbf{a}}:H^{\otimes n}\rightarrow H^{\odot \mathbf{a}, \otimes \mathbf{a}^c}$, which symmetrises any $n$-tensor in its $k$ components specified by $\mathbf{a}$, 
and the operator $A_{\mathbf{a}}:H^{\otimes n}\rightarrow H^{\wedge \mathbf{a}, \otimes \mathbf{a}^c}$, which skew-symmetrises any $n$-tensor in its $k$ components specified by $\mathbf{a}$. To be more precise, for 
$h\in H^{\otimes n}$ and $\rho\in \mathfrak{S}_k$, denote by 
$\rho^{\mathbf{a}}h$ the element of $H^{\otimes n}$ that has its $\mathbf{a}$ components permuted by $\rho$ and the remaining components fixed, so $S_{\mathbf{a}}$ and  $A_{\mathbf{a}}$ are defined, respectively,  by
\[
S_{\mathbf{a}}h 
=\frac{1}{k!} \sum_{\rho \in \mathfrak{S}_k} \rho^{\mathbf{a}}h,
\quad \mbox{ and } \quad
A_{\mathbf{a}}h 
=\frac{1}{k!} \sum_{\rho \in \mathfrak{S}_k} \sgn(\rho) \rho^{\mathbf{a}}h. 
\]
From our earlier discussion, it is clear that $H^{\odot \mathbf{a}, \otimes \mathbf{a}^c}$ is the image of $H^{\otimes n}$ under $S_{\mathbf{a}}$, $H^{\wedge \mathbf{a}, \otimes \mathbf{a}^c}$ is the image under $A_{\mathbf{a}}$, and $H^{\odot \mathbf{a}, \wedge \mathbf{a}^c}$ is the image under $S_{\mathbf{a}}A_{\mathbf{a}^c}$. 
\par
If $\{e_i\}_{i\in\mathbb{N}}$ is a complete orthonormal basis of $H$, we have correspondingly $\{e_{i_1}\otimes\cdots\otimes e_{i_n}\}_{i_1, \cdots, i_n = 1}^\infty$ as a complete orthonormal basis of $H^{\otimes n}$.  
We can choose a basis of $H^{\odot n}$ whose elements are of the form $e_{i_1}\wedge\cdots\wedge e_{i_n}$, with $i_1, \cdots, i_n$ all integers. 
Similarly, we also have a basis of $H^{\wedge n}$ with elements of the form $e_{j_1}\wedge\cdots\wedge e_{j_n}$, 
where $j_1, \cdots, j_q$ are \emph{distinct} integers. For $H^{\odot k}\otimes H^{\wedge q}$, we can similarly take basis elements of the form 
\begin{equation}
\label{eq:Hkq_basis}
e_{i_1}\wedge\cdots\wedge e_{i_k}\otimes e_{j_1}\wedge\cdots\wedge e_{j_q}, 
\end{equation}
where each of the indices $i_1, \cdots, i_k$, $j_1, \cdots$ and $j_{q}$ run from $1$ to $\infty$, and the $j$'s have to be all distinct. 
\par
For $H^{\odot [k], \wedge [q]}$, its basis elements can be chosen almost the same way as in (\ref{eq:Hkq_basis}), but the positions of the components which are symmetric and those which are skew-symmetric depend on one of the $C(n, k)$ choices from $\mathbf{n}^{[k]}$. 
A given basis element of $H^{\odot k}\otimes H^{\wedge q}$ of the form (\ref{eq:Hkq_basis}), say, $b$, corresponds to two specific collections of basis elements of $H$, counted with multiplicity:
\[
E_{b_i} = 
\{e_{i_1}, \cdots, e_{i_k}\}, \quad\mbox{ and } 
E_{b_j} =
\{ e_{j_1},\cdots,e_{j_q}\}.
\]
The vector $b$ also corresponds to the element $\mathbf{a}_b=\{1, \cdots, k\}$ in $\mathbf{n}^{[k]}$.  
Similarly, a basis element of $H^{\odot [k], \wedge [q]}$ involves firstly the choice of two  
collections of basis elements of $H$, counted with multiplicity, where one set (of $k$ elements specified by the $i$-indices) forms the symmetric part of the basis element, and the other set (of $q$ distinct elements specified by the $j$-indices) forms the skew-symmetric part; and secondly the choice of an element in $\mathbf{n}^{[k]}$, for the positioning of the $k$ symmetric components. 
For $b\in H^{\odot k}\otimes H^{\wedge q}$ as in (\ref{eq:Hkq_basis}), 
since the action of $\mathfrak{S}_n$ permutes the $n$ components of $b$, the orbit $O_b$ of $b$ under the action of $\mathfrak{S}_n$ covers all the $C(n, k)$ possibilities of the positioning. 
We can therefore enumerate all our basis elements of $H^{\odot [k], \wedge [q]}$ by going through the basis elements of $H^{\odot k}\otimes H^{\wedge q}$ of the form (\ref{eq:Hkq_basis}) (for our purpose, we don't need to worry about the possible repetitions). 
Denote by $V_{O_b}$ the span of the vectors in $O_b$, which is a subspace of $H^{\odot [k], \wedge [q]}$.  We have thus proved the following 
\begin{lem}
\label{lem:Hkq_span}
Given any $k, q\in\mathbb{N}$,  
we have 
\begin{equation}
\label{eq:H[kq]}
H^{\odot [k], \wedge [q]} = \textup{Span}\left(\bigcup_b V_{O_b}\right),
\end{equation}
where the union is taken over all basis elements 
of a complete orthonormal basis of $H^{\odot k}\otimes H^{\wedge q}$.
\end{lem}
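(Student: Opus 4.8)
The plan is to reduce (\ref{eq:H[kq]}) to a single identity about the $\mathfrak{S}_n$-action on $H_{k,q}$, exploiting the three observations assembled just before the statement: that $H^{\odot[k],\wedge[q]}$ is by construction the closed linear span of the $C(n,k)$ subspaces $H^{\odot\mathbf{a},\wedge\mathbf{a}^c}$ with $\mathbf{a}\in\mathbf{n}^{[k]}$; that each basis element $b$ of the form (\ref{eq:Hkq_basis}) lies in $H_{k,q}=H^{\odot\{1,\dots,k\},\wedge\{k+1,\dots,n\}}$; and that its orbit $O_b$ under $\mathfrak{S}_n$ sweeps out all $C(n,k)$ positionings of the symmetric and skew-symmetric blocks. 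Since the chosen vectors $b$ form a basis of $H_{k,q}$, the span of $\bigcup_b V_{O_b}=\bigcup_b\textup{Span}\,O_b$ is exactly $\sum_{\sigma\in\mathfrak{S}_n}\sigma\cdot H_{k,q}$, so the whole lemma will follow once I identify this sum with $H^{\odot[k],\wedge[q]}$.

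The next step is the bookkeeping identity $\sigma\cdot H_{k,q}=H^{\odot\sigma(\{1,\dots,k\}),\wedge\sigma(\{1,\dots,k\})^c}$, valid for every $\sigma\in\mathfrak{S}_n$ because permuting the tensor factors of a vector symmetric in positions $\{1,\dots,k\}$ and skew-symmetric in the rest produces a vector symmetric in the displaced positions $\sigma(\{1,\dots,k\})$ and skew-symmetric in their complement. As $\sigma$ ranges over $\mathfrak{S}_n$, the set $\sigma(\{1,\dots,k\})$ ranges over all of $\mathbf{n}^{[k]}$ by transitivity of the action on $k$-subsets; hence $\sum_{\sigma}\sigma\cdot H_{k,q}=\sum_{\mathbf{a}\in\mathbf{n}^{[k]}}H^{\odot\mathbf{a},\wedge\mathbf{a}^c}$, and by the first assembled observation this is precisely $H^{\odot[k],\wedge[q]}$. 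Both inclusions of (\ref{eq:H[kq]}) are thereby obtained at once.

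The only genuinely delicate point, and the place I would be most careful, is the role of closures in the infinite-dimensional setting. The symmetric-group action is unitary for the Hilbert cross norm on $H^{\otimes n}$, so each $\sigma$ carries a basis of $H_{k,q}$ to a total subset of $\sigma\cdot H_{k,q}=H^{\odot\mathbf{a},\wedge\mathbf{a}^c}$ and preserves the defining closed subspace $H^{\odot[k],\wedge[q]}$; one must read ``$\textup{Span}$'' in (\ref{eq:H[kq]}) as the closed linear span, exactly as in the definition of $H^{\odot[k],\wedge[q]}$, so that the finite sums $\sum_{\mathbf{a}}H^{\odot\mathbf{a},\wedge\mathbf{a}^c}$ and their closures match the two sides of the asserted equality. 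With this understood, the identification above is an equality on the nose rather than merely up to closure, and the lemma follows.
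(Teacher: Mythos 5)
Your proof is correct and follows essentially the same route as the paper: both rest on the observations that $H^{\odot[k],\wedge[q]}$ is by definition the (closed) span of the $C(n,k)$ subspaces $H^{\odot\mathbf{a},\wedge\mathbf{a}^c}$ and that the $\mathfrak{S}_n$-orbit of a basis element of $H_{k,q}$ sweeps out all positionings $\mathbf{a}\in\mathbf{n}^{[k]}$. Your subspace-level formulation, $\textup{Span}\bigl(\bigcup_b V_{O_b}\bigr)=\sum_{\sigma\in\mathfrak{S}_n}\sigma\cdot H_{k,q}=\sum_{\mathbf{a}}H^{\odot\mathbf{a},\wedge\mathbf{a}^c}$, is a slightly cleaner packaging of the paper's basis-element enumeration (it sidesteps the repetition issue the paper waves away), and your remark on unitarity and closed spans is the right way to handle the infinite-dimensional point.
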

In the sequel, we will often study the basis elements of $H^{\odot [k], \wedge [q]}$ through those of $H^{\odot k}\otimes H^{\wedge q}$, which give easier notation for explicit expressions. 
\par
For a fixed basis element $b$ of $H^{\odot [k], \wedge [q]}$, 
there is a subgroup of $\mathfrak{S}_n$ isomorphic to $\mathfrak{S}_k\times \mathfrak{S}_q$ whose representation on the one-dimensional space spanned by $b$ is $[k]\sharp[1^q]$, the outer tensor product of $[k]$ and $[1^q]$, an irreducible representation of $\mathfrak{S}_k\times \mathfrak{S}_q$ (e.g., see Section 2.3 of \cite{james1981representation}). 
The irreducible representation $[k]\sharp[1^q]$ on Span($b$) induces into $\mathfrak{S}_n$ the representation $[k][1^q]$ on $V_{O_b}$.
A simple application of the Littlewood-Richardson rule (Theorem 2.8.13, or more directly, 
Corollary 2.8.14, of \cite{james1981representation}) yields the following decomposition into irreducible constituents:
\begin{equation}
\label{eq:L-R_decom}
[k][1^q] = [k+1, 1^{q-1}]\oplus[k, 1^q].
\end{equation}
Hence, every subspace $V_{O_b}$  
splits into a direct sum of two irreducible components 
\begin{equation}
\label{eq:Serre_decom}
V_{O_b} = V^+_{O_b} \oplus V^-_{O_b}, 
\end{equation}
where $V^+_{O_b}$ and $V^-_{O_b}$ correspond to $[k+1, 1^{q-1}]$ and $[k, 1^q]$, respectively. 
\par
Suppose we have another basis element $b'$ of $H^{\odot [k], \wedge [q]}$, 
with a corresponding orbit $O_{b'}$ and an associated subspace $V_{O_{b'}}$. 
As in the discussion earlier, in terms of the basis elements of $H$ appearing in the expression of $b$ and $b'$, we have 
two sets
$E_{b_i} = \{e_{i_1}, \cdots, e_{i_k}\}$ and 
$E_{b_j} = \{ e_{j_1},\cdots,e_{j_q}\}$, 
where the $i$'s and $j$'s are integers and the $j$'s are all distinct; and similarly 
$E_{b'_i} = \{e_{i'_1}, \cdots, e_{i'_k}\}$ and 
$E_{b'_j} = \{ e_{j'_1},\cdots,e_{j'_q}\}$,
where the $i'$'s and $j'$'s are integers and the $j'$'s are all distinct. 
\par
Observe that the orbits $O_b$ and $O_{b'}$ are disjoint and the spaces $V_{O_b}$ and $V_{O_{b'}}$ have a trivial intersection, 
as long as the sequences $(E_{b_i}, E_{b_j})$ and $(E_{b'_i}, E_{b'_j})$ differ. 
Therefore, each $V_{O_b}$ intersects at most finitely many other subspaces $V_{O_{b'}}$.  If we have a non-trivial element $v\in V_{O_b}\cap V_{O_{b'}}$, the orbit of $v$ under the action of $\mathfrak{S}_n$ spans an invariant subspace of both $V_{O_b}$ and $V_{O_{b'}}$. Our earlier discussion shows that, either these two spaces coincide, i.e., 
\[
V_{O_b}=V_{O_{b'}}(=V_{O_b}\cap V_{O_{b'}}),
\]
or their intersection corresponds to one of the two components in (\ref{eq:L-R_decom}), i.e., $[k+1, 1^{q-1}]$ and $[k, 1^q]$,
so in terms of (\ref{eq:Serre_decom}),
\[
V_{O_b}\cap V_{O_{b'}} = V^+_{O_b} \mbox{ or }  V^-_{O_b}.
\]
In summary, the action of $\mathfrak{S}_n$ splits the collection of our basis elements of $H^{\odot [k], \wedge [q]}$ 
into disjoint subsets, each of which spans a vector subspace of $H^{\odot [k], \wedge [q]}$, 
which is a copy of the representation $[k][1^q]$ of $\mathfrak{S}_n$. Any non-trivial intersection of these vector subspaces, when they do not coincide, is limited to be one of the two irreducible components, as shown above.  Therefore, the space $H^{\odot [k], \wedge [q]}$ 
is 
made of infinitely many finite-dimensional isomorphic representations of $\mathfrak{S}_n$, each isomorphic to $[k][1^q]$, mostly disjoint from the rest, but possibly intersecting a few along its irreducible components.  
\par
This discussion enables us to state the following decomposition of $H^{\odot [k], \wedge [q]}$ 
inside $H^{\otimes n}$. 
\begin{lem}
\label{lem:L-R_decom_H}
Given any $k, q\in\mathbb{N}$, let $n=k+q$. Then we have 
\begin{equation}
\label{eq:L-R_decom_H}
H^{\odot [k], \wedge [q]}= (H^{\odot [k], \wedge [q]}\cap H^{\odot [k+1], \wedge [q-1]})
\oplus (H^{\odot [k], \wedge [q]} \cap H^{\odot [k-1], \wedge [q+1]}),
\end{equation}
where the equality is understood to take place inside $H^{\otimes n}$. 
\end{lem}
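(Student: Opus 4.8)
The plan is to recast (\ref{eq:L-R_decom_H}) as the statement that the two summands on its right are exactly the two $\mathfrak{S}_n$-isotypic components of $H^{\odot[k],\wedge[q]}$, which the preceding orbit analysis has essentially already isolated. First I would set $W^+:=\textup{Span}\big(\bigcup_b V^+_{O_b}\big)$ and $W^-:=\textup{Span}\big(\bigcup_b V^-_{O_b}\big)$, using the splitting (\ref{eq:Serre_decom}) of each orbit space into its $[k+1,1^{q-1}]$- and $[k,1^q]$-parts. Since the $V_{O_b}$ span $H^{\odot[k],\wedge[q]}$ by Lemma \ref{lem:Hkq_span} and each equals $V^+_{O_b}\oplus V^-_{O_b}$, we get $H^{\odot[k],\wedge[q]}=W^++W^-$. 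As $W^+$ is a sum of copies of the irreducible $[k+1,1^{q-1}]$ while $W^-$ is a sum of copies of the non-isomorphic irreducible $[k,1^q]$, complete reducibility of $\mathfrak{S}_n$-modules forces $W^+\cap W^-=\{0\}$, so the sum is direct and $H^{\odot[k],\wedge[q]}=W^+\oplus W^-$.

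It then remains to identify $W^+$ with $H^{\odot[k],\wedge[q]}\cap H^{\odot[k+1],\wedge[q-1]}$ and $W^-$ with $H^{\odot[k],\wedge[q]}\cap H^{\odot[k-1],\wedge[q+1]}$. One inclusion is purely a matter of constituents. Running the orbit argument of this section for the pair $(k+1,q-1)$ shows that the constituents of $H^{\odot[k+1],\wedge[q-1]}$ are those of $[k+1][1^{q-1}]=[k+2,1^{q-2}]\oplus[k+1,1^{q-1}]$ (the Littlewood–Richardson analogue of (\ref{eq:L-R_decom}), valid partitions only). The intersection $H^{\odot[k],\wedge[q]}\cap H^{\odot[k+1],\wedge[q-1]}$ is an $\mathfrak{S}_n$-submodule of both spaces, so by semisimplicity its constituents lie in $\{[k+1,1^{q-1}],[k,1^q]\}\cap\{[k+2,1^{q-2}],[k+1,1^{q-1}]\}=\{[k+1,1^{q-1}]\}$; hence it is $[k+1,1^{q-1}]$-isotypic and contained in $W^+$. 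The symmetric computation with $(k-1,q+1)$ gives $H^{\odot[k],\wedge[q]}\cap H^{\odot[k-1],\wedge[q+1]}\subseteq W^-$.

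The reverse inclusions $W^+\subseteq H^{\odot[k+1],\wedge[q-1]}$ and $W^-\subseteq H^{\odot[k-1],\wedge[q+1]}$ are the crux, and the main obstacle is precisely that an abstract isomorphism of $\mathfrak{S}_n$-modules does not by itself place one subspace of $H^{\otimes n}$ inside another: I must show that the isotypic piece $W^+$ literally consists of tensors symmetric in $k+1$ slots and skew in the complementary $q-1$. The concrete route is to produce, for each orbit representative $b$ whose symmetric block sits in positions $\{1,\dots,k\}$ and skew block in $\{k+1,\dots,n\}$, an explicit intertwiner landing in $H^{\odot[k+1],\wedge[q-1]}$: the operator $S_{\{1,\dots,k+1\}}$ leaves the skew-symmetry in positions $\{k+2,\dots,n\}$ untouched and sends $b$ into $H^{\odot\{1,\dots,k+1\},\wedge\{k+2,\dots,n\}}\subseteq H^{\odot[k+1],\wedge[q-1]}$; I would then check that, restricted to $V_{O_b}$, it realises the projection onto the $[k+1,1^{q-1}]$-component, giving $V^+_{O_b}\subseteq H^{\odot[k+1],\wedge[q-1]}$ and hence $W^+\subseteq H^{\odot[k+1],\wedge[q-1]}$.

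A cleaner alternative, which I would prefer for rigour, dispenses with explicit operators by invoking Schur–Weyl duality. Since $H^{\odot[k],\wedge[q]}$ is invariant under $\mathfrak{S}_n$ (as already noted) and under the diagonal action of $GL(H)$, which commutes with permutations and preserves each symmetry type, it is a submodule for $GL(H)\times\mathfrak{S}_n$, hence a direct sum of the irreducibles $S^\lambda(H)\otimes[\lambda]$ (Schur functor times Specht module) occurring in $H^{\otimes n}$. The constituent analysis above restricts $\lambda$ to $\{(k+1,1^{q-1}),(k,1^q)\}$, so $W^+$ is the canonical isotypic component $S^{(k+1,1^{q-1})}(H)\otimes[k+1,1^{q-1}]$ of $H^{\otimes n}$; the same analysis applied to $H^{\odot[k+1],\wedge[q-1]}$ exhibits this very subspace as one of its two summands, whence $W^+\subseteq H^{\odot[k+1],\wedge[q-1]}$, and symmetrically $W^-\subseteq H^{\odot[k-1],\wedge[q+1]}$. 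Combined with the constituent inclusions of the second paragraph this yields the two identifications, and together with $H^{\odot[k],\wedge[q]}=W^+\oplus W^-$ it gives (\ref{eq:L-R_decom_H}).
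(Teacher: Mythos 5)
Your proposal is correct, and its core coincides with the paper's proof: both rest on the orbit analysis preceding the lemma, the Littlewood--Richardson splittings (\ref{eq:L-R_decom}) and their analogues for $(k\pm 1,q\mp 1)$, and the observation that $[k][1^q]$ shares exactly one irreducible constituent with each neighbour. Your ``concrete route'' is in fact literally the paper's argument: since $b$ is already symmetric in its first $k$ slots, $S_{\{1,\dots,k+1\}}b=\frac{1}{k+1}\sum_{l=1}^{k+1}\tau_{l,k+1}b$, which is precisely the paper's $v^+$ (and the skew analogue $A_{\{k,\dots,n\}}b$ collapses to its $v^-$). One small correction there: $S_{\{1,\dots,k+1\}}$ restricted to $V_{O_b}$ is not $\mathfrak{S}_n$-equivariant and hence is not ``the projection onto the $[k+1,1^{q-1}]$-component''; but you do not need that --- it suffices that $v^+$ is a non-zero element of $V_{O_b}\cap H^{\odot [k+1],\wedge [q-1]}$, after which your own constituent bookkeeping from the second paragraph forces the $\mathfrak{S}_n$-span of $v^+$ to be $V^+_{O_b}$, exactly as in the paper. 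Where you genuinely depart from the paper is the Schur--Weyl finish: identifying $W^{\pm}$ with canonical $GL(H)\times\mathfrak{S}_n$-isotypic components of $H^{\otimes n}$ yields the containments $W^{\pm}\subseteq H^{\odot [k\pm 1],\wedge [q\mp 1]}$ without exhibiting any vectors, which is cleaner and more conceptual; the price is verifying $GL(H)$-invariance of the spaces $H^{\odot [k],\wedge [q]}$ and knowing that the relevant $\lambda$ actually occur with non-zero multiplicity in each space --- which is exactly what the explicit vectors $v^{\pm}$ certify, so the paper's small computation is not entirely dispensable. Both your argument and the paper's share the same minor unaddressed degeneracy: for orbits with repeated indices one of $V^{\pm}_{O_b}$ (and the corresponding $v^{\pm}$) may vanish, which is harmless for the direct sum but should be noted when asserting that each $V_{O_b}$ splits into \emph{two} non-trivial components.
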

\begin{proof}
As mentioned above, $\mathfrak{S}_n$ acts naturally on $H^{\otimes n}$ by permuting the $n$ components. 
The vector subspaces in question, i.e., $H^{\odot [k], \wedge [q]}$, $H^{\odot [k+1], \wedge [q-1]}$, and $H^{\odot [k-1], \wedge [q+1]}$, as well as their intersections appearing in (\ref{eq:L-R_decom_H}), are $\mathfrak{S}_n$-invariant subspaces of $H^{\otimes n}$. 
\par 
Lemma \ref{lem:Hkq_span} and the discussion afterwards show that $H^{\odot [k], \wedge [q]}$ consists of subspaces isomorphic to the representation $[k][1^q]$ of $\mathfrak{S}_n$, each intersecting finitely many others, with the non-trivial intersection being one of the two irreducible components of $[k][1^q]$. 
The same statements can be made for $H^{\odot [k+1], \wedge [q-1]}$ and $H^{\odot [k-1], \wedge [q+1]}$, but replacing $[k][1^q]$ with 
$[k+1][1^{q-1}]$ and $[k-1][1^{q+1}]$, respectively. 
\par
Similar to (\ref{eq:L-R_decom}), we also have the Littlewood-Richardson decompositions for $[k+1][1^{q-1}]$ and $[k-1][1^{q+1}]$, i.e., 
\[
[k+1][1^{q-1}] =[k+2, 1^{q-2}] \oplus [k+1, 1^{q-1}]
\]
and 
\[
[k-1][1^{q+1}] = [k, 1^{q}]\oplus[k-1, 1^{q+1}],
\]
respectively. Observe that $[k][1^q]$ has exactly one irreducible component in common with $[k+1][1^{q-1}]$, which is $[k+1, 1^{q-1}]$, and exactly one with $[k-1][1^{q+1}] $, which is $[k-1, 1^{q+1}]$, and no other ones. 
\par
Now Lemma \ref{lem:Hkq_span} implies that $H^{\odot [k+1], \wedge [q-1]}$ and $H^{\odot [k-1], \wedge [q+1]}$ have only a trivial intersection; indeed, the intersection would have to be $\mathfrak{S}_n$-invariant, but (\ref{eq:H[kq]}) and (\ref{eq:L-R_decom}) show that it has to be trivial.  Therefore, we only need to show that $H^{\odot [k], \wedge [q]}$ does intersect $H^{\odot [k+1], \wedge [q-1]}$ and $H^{\odot [k-1], \wedge [q+1]}$ separately, in a manner corresponding to the way $[k][1^q] $ intersects with $[k+1][1^{q-1}]$ and $[k-1][1^{q+1}]$, which then gives us the direct sum as in (\ref{eq:L-R_decom_H}). 
\par
Again we can look at an arbitrary basis element $b$ of the form (\ref{eq:Hkq_basis}), 
and its associated vector subspace $V_{O_b}\subset H^{\odot [k], \wedge [q]}$. All we need is to find two vectors, 
\[
v^+\in V_{O_b}\cap H^{\odot [k+1], \wedge [q-1]}, \quad\mbox{ and }
v^-\in V_{O_b}\cap H^{\odot [k-1], \wedge [q+1]},
\]
since the two disjoint invariant subspaces of $V_{O_b}$, $V_{O_b}\cap H^{\odot [k+1], \wedge [q-1]}$ and $V_{O_b}\cap H^{\odot [k-1], \wedge [q+1]}$, have to correspond to the $[k+1, 1^{q-1}]$ and $[k, 1^{q}]$ components, respectively.  
\par
Denote by $\tau_{i,j}$ the transposition operator on $H^{\otimes n}$, which acts by exchanging the $i$-th and $j$-th components of a tensor product: i.e., given any $h_1, \cdots, h_n \in H$, 
\begin{equation*}
\tau_{i,j}(h_1\otimes\cdots\otimes h_i\otimes\cdots\otimes h_j\otimes\cdots\otimes h_n)
=h_1\otimes\cdots\otimes h_j\otimes\cdots\otimes h_i\otimes\cdots\otimes h_n. 
\end{equation*}
Now we can express the result of swapping the $l$-th and $(k+1)$-th components of $b$ as $\tau_{l, k + 1} b$, 
which is an element of $O_b$ and  
of $H^{\odot \mathbf{a}(l), \wedge \mathbf{a}(l)^c}$, where we define $\mathbf{a}(l) =\{1, \cdots, \hat l, \cdots, k, k+1\}\in \mathbf{n}^{[k]}$, and $l$ ranges from $1$ to $k+1$. 
Similarly,  for each $m=k+1, \cdots, n$, we have $\tau_{k, m} b$, 
an element of $O_b$ and 
of $H^{\odot \mathbf{a}<m>, \wedge \mathbf{a}<m>^c}$, with $\mathbf{a}\!<\!m\!> =\{1, \cdots, k-1, m\}\in \mathbf{n}^{[k]}$. 
We conclude the proof by setting 
\[
v^+ =\frac{1}{k+1} \sum_{l=1}^{k+1}\tau_{l, k + 1} b
\]
and 
\[
v^-= \frac{1}{q+1} (b - \sum_{m=k+1}^{n}\tau_{k,m} b).\qedhere
\]
\end{proof}
\begin{cor}
\label{cor:L-R_decom_H_a}
Given any $\mathbf{a}\in \mathbf{n}^{[k]}$, we have 
\begin{equation}
\label{eq:L-R_decom_H_a}
H^{\odot \mathbf{a}, \wedge \mathbf{a}^c}= (H^{\odot \mathbf{a}, \wedge \mathbf{a}^c}\cap H^{\odot [k+1], \wedge [q-1]})
\oplus (H^{\odot \mathbf{a}, \wedge \mathbf{a}^c}\cap H^{\odot [k-1], \wedge [q+1]}).
\end{equation}
\end{cor}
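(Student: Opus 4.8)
The plan is to deduce the corollary from Lemma \ref{lem:L-R_decom_H} by exploiting the fact that, although $H^{\odot \mathbf{a}, \wedge \mathbf{a}^c}$ is \emph{not} $\mathfrak{S}_n$-invariant, it is the range of a single element of the group algebra $\mathbb{R}[\mathfrak{S}_n]$, namely the composite $\Pi_{\mathbf{a}} = S_{\mathbf{a}}A_{\mathbf{a}^c}$ already identified in the text. Write the conclusion of Lemma \ref{lem:L-R_decom_H} as $H^{\odot [k], \wedge [q]} = W^+ \oplus W^-$, where $W^+ = H^{\odot [k], \wedge [q]}\cap H^{\odot [k+1], \wedge [q-1]}$ and $W^- = H^{\odot [k], \wedge [q]}\cap H^{\odot [k-1], \wedge [q+1]}$. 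Both summands are closed $\mathfrak{S}_n$-invariant subspaces, being intersections of such, and they are mutually orthogonal, carrying the non-isomorphic irreducible types $[k+1, 1^{q-1}]$ and $[k, 1^q]$. Let $P^+, P^-$ be the orthogonal projections of $H^{\odot [k], \wedge [q]}$ onto $W^+$ and $W^-$.

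First I would record two elementary facts. Since $S_{\mathbf{a}}$ and $A_{\mathbf{a}^c}$ act on disjoint blocks of tensor slots they commute, and each is a self-adjoint idempotent (the summation sets are closed under inversion), so $\Pi_{\mathbf{a}}$ is an orthogonal projection with range exactly $H^{\odot \mathbf{a}, \wedge \mathbf{a}^c}$; thus $v\in H^{\odot \mathbf{a}, \wedge \mathbf{a}^c}$ if and only if $\Pi_{\mathbf{a}}v = v$. Secondly, because $W^+$ and $W^-$ are $\mathfrak{S}_n$-submodules, the projections $P^{\pm}$ are $\mathfrak{S}_n$-equivariant: for $v = w^+ + w^-$ and $g\in\mathfrak{S}_n$ one has $gv = gw^+ + gw^-$ with $gw^{\pm}\in W^{\pm}$, whence $P^{\pm}g = gP^{\pm}$ on $H^{\odot [k], \wedge [q]}$. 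Consequently $P^{\pm}$ commutes with every element of $\mathbb{R}[\mathfrak{S}_n]$, in particular with $\Pi_{\mathbf{a}}$.

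The key step is then immediate: if $v\in H^{\odot \mathbf{a}, \wedge \mathbf{a}^c}\subseteq H^{\odot [k], \wedge [q]}$, so that $\Pi_{\mathbf{a}}v = v$, then $\Pi_{\mathbf{a}}(P^{\pm}v) = P^{\pm}(\Pi_{\mathbf{a}}v) = P^{\pm}v$, which shows $P^{\pm}v\in H^{\odot \mathbf{a}, \wedge \mathbf{a}^c}$. Hence the decomposition $v = P^+v + P^-v$ takes place inside $H^{\odot \mathbf{a}, \wedge \mathbf{a}^c}$, giving $H^{\odot \mathbf{a}, \wedge \mathbf{a}^c} = (H^{\odot \mathbf{a}, \wedge \mathbf{a}^c}\cap W^+)\oplus (H^{\odot \mathbf{a}, \wedge \mathbf{a}^c}\cap W^-)$. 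Finally, since $H^{\odot \mathbf{a}, \wedge \mathbf{a}^c}\subseteq H^{\odot [k], \wedge [q]}$ the intersections simplify, $H^{\odot \mathbf{a}, \wedge \mathbf{a}^c}\cap W^+ = H^{\odot \mathbf{a}, \wedge \mathbf{a}^c}\cap H^{\odot [k+1], \wedge [q-1]}$ and likewise for the minus part, which is exactly (\ref{eq:L-R_decom_H_a}). Note that this argument is uniform in $\mathbf{a}$, so it simultaneously yields Lemma \ref{lem:L-R_decom_Hkq} as the case $\mathbf{a} = \{1,\dots,k\}$ without any circularity.

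I expect the only genuine subtlety to be the passage from the representation-theoretic decomposition to an operator identity valid on the \emph{completed} tensor product: one must know that $W^+$ and $W^-$ are closed and that $P^{\pm}$ are bounded, so that $P^{\pm}\Pi_{\mathbf{a}} = \Pi_{\mathbf{a}}P^{\pm}$ holds on all of $H^{\otimes n}$ rather than merely on algebraic tensors. This is handled by the orthogonality of $W^+$ and $W^-$, itself a consequence of the unitarity of the $\mathfrak{S}_n$-action on $H^{\otimes n}$, which makes $P^{\pm}$ contractive orthogonal projections; the commutation then needs only to be checked on the dense span of decomposable tensors, where it reduces to the equivariance established above and extends by continuity.
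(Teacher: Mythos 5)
Your argument is correct and is essentially the paper's own proof in slightly more abstract clothing: the paper writes $g=A_{\mathbf{a}^c}S_{\mathbf{a}}g$, decomposes $g=\tilde g+\hat g$ via Lemma \ref{lem:L-R_decom_H}, applies $A_{\mathbf{a}^c}S_{\mathbf{a}}$ to both summands and invokes uniqueness of the decomposition, which is precisely your commutation identity $P^{\pm}\Pi_{\mathbf{a}}=\Pi_{\mathbf{a}}P^{\pm}$ applied to a fixed point of $\Pi_{\mathbf{a}}$. Your added remarks on orthogonality of $W^{\pm}$ and boundedness of $P^{\pm}$ on the completed tensor product are a welcome (and correct) precision that the paper leaves implicit.
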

\begin{proof}For any $g\in H^{\odot \mathbf{a}, \wedge \mathbf{a}^c}\subset H^{\odot [k], \wedge [q]}$, we have 
\[
g= A_{\mathbf{a}^c}S_{\mathbf{a}} g.
\]
Lemma \ref{lem:L-R_decom_H} gives a direct-sum decomposition for $g$
\[
g=\tilde g + \hat g,
\]
with 
$\tilde g\in H^{\odot [k], \wedge [q]}\cap H^{\odot [k+1], \wedge [q-1]}$, and $\hat g\in H^{\odot [k], \wedge [q]}\cap H^{\odot [k-1], \wedge [q+1]}$. So we have
\[
g=A_{\mathbf{a}^c}S_{\mathbf{a}}g=A_{\mathbf{a}^c}S_{\mathbf{a}}\tilde g + A_{\mathbf{a}^c} S_{\mathbf{a}}\hat g, 
\]
where 
\[
A_{\mathbf{a}^c}S_{\mathbf{a}}\tilde g \in (H^{\odot [k], \wedge [q]}\cap H^{\odot [k+1], \wedge [q-1]}) \cap H^{\odot \mathbf{a}, \wedge \mathbf{a}^c}
= H^{\odot \mathbf{a}, \wedge \mathbf{a}^c}\cap H^{\odot [k+1], \wedge [q-1]},
\]
and 
\[
A_{\mathbf{a}^c}S_{\mathbf{a}}\hat g\in (H^{\odot [k], \wedge [q]}\cap H^{\odot [k-1], \wedge [q+1]} )
\cap H^{\odot \mathbf{a},  \wedge \mathbf{a}^c}
= H^{\odot \mathbf{a}, \wedge \mathbf{a}^c}\cap H^{\odot [k-1], \wedge [q+1]}.
\] 
By the uniqueness of the direct-sum decomposition, we have $\tilde g=A_{\mathbf{a}^c}S_{\mathbf{a}}\tilde g$ and $\hat g=A_{\mathbf{a}^c}S_{\mathbf{a}}\hat g$, and the proof is complete. 
\end{proof}
W. Hamernik \cite{hamernik1976specht} gave a proof of the decomposition (\ref{eq:L-R_decom_H}) for the case where $H$ is a finite-dimensional vector space. 
\bibliographystyle{amsplain}
\bibliography{../../wt}        
\end{document}